\documentclass{amsart}
\usepackage{latexsym,amssymb,amsmath,amsthm,latexsym}
\usepackage{amsfonts}
\theoremstyle{definition}
\newtheorem{theorem}{Theorem}

\newtheorem{corollary}{Corollary}

\newtheorem{proposition}{Proposition}

\newtheorem{remark}{Remark}

\newtheorem{identity}{Identity}
\title{On Enumeration of Dyck Paths with Colored Hills}
\author{Milan Janji\'c}
\date{\today}
\begin{document}
\maketitle
\begin{center}Department for Mathematics and Informatics, University of Banja
Luka,\end{center}
 \begin{center}Republic of Srpska, BA\end{center}
\begin{abstract} We continue to investigate the properties of the earlier defined functions
 $f_m$ and $g_m$, which depend on an initial arithmetic function $f_0$.
 In this papers values of $f_0$ are the Fine numbers.  We investigate functions $f_i,g_i,(i=1,2,3,4)$. For each function, we derive an explicit formula and give a combinatorial interpretation.
 It appears that  $g_2$ and $g_3$ are well-known combinatorial object called the Catalan triangles.

 We finish with an identity consisting of ten items.
\end{abstract}

\section{Introduction} This paper is a  continuation of the investigations of restricted
words from the author's  previous papers, where two quantities $f_m(n)$ and $g_m(n,k)$ are
defined as follows. For an initial arithmetic function $f_0$, $f_m,(m>1)$ is the $m$th
invert transform of $f_0$. The function $g_m(n,k)$ is defined  in the following
way:
\begin{equation}\label{cmnk}g_m(n,k)=\sum_{i_1+i_2+\cdots+i_k=n}f_{m-1}(i_1)\cdot
f_{m-1}(i_2)\cdots f_{m-1}(i_k).\end{equation}
Also, the following equation holds:
\begin{equation}\label{ffm}
f_m(n)=\sum_{k=1}^ng_m(n,k).
\end{equation}
We restate~\cite [Propositions 10]{ja3} which will be used throughout the paper.
\begin{proposition}\label{prr}
  Let $f_0$  the arithmetic function which values are nonnegative integers, and $f_{0}(1)=1$. Assume next  that, for $n\geq 1$, we have $f_{m-1}(n)$
words of length $n-1$ over a finite alphabet $\alpha$. Let  $x$ be a letter which is not
in $\alpha$. Then, the value of $g_m(n,k)$ is the number of words of length $n-1$ over
the alphabet $\alpha\cup\{x\}$ in which $x$ appears exactly $k-1$ times.
\end{proposition}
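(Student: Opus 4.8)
The plan is to prove Proposition~\ref{prr} by a direct ``place the separators, fill the gaps'' count: a word of the prescribed type is completely determined by its maximal factors containing no $x$, and reading off those factors turns the defining product in~\eqref{cmnk} into a product of block-counts.

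First I would fix a word $w$ of length $n-1$ over $\alpha\cup\{x\}$ having exactly $k-1$ letters equal to $x$. Reading $w$ from left to right and splitting it at each occurrence of $x$ yields a unique factorization $w=w_1\,x\,w_2\,x\cdots x\,w_k$, where $w_1,\dots,w_k$ are the (possibly empty) maximal factors of $w$ that contain no $x$; these are words over $\alpha$, and conversely any $k$-tuple $(w_1,\dots,w_k)$ of words over $\alpha$ reconstructs such a $w$. So $w\mapsto(w_1,\dots,w_k)$ is a bijection, and by the standing hypothesis the number of admissible choices of a block of length $\ell$ is $f_{m-1}(\ell+1)$.

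Next comes the length bookkeeping. Put $i_r=|w_r|+1\ge 1$ for $r=1,\dots,k$. Since $w$ has length $n-1$ and contains $k-1$ copies of $x$, counting letters gives $\sum_{r=1}^k|w_r|=(n-1)-(k-1)=n-k$, hence $\sum_{r=1}^k i_r=n$; and as $w$ ranges over all admissible words, $(i_1,\dots,i_k)$ ranges over exactly the compositions of $n$ into $k$ positive parts. For a fixed composition the blocks may be chosen independently, so the number of corresponding words $w$ is $\prod_{r=1}^k f_{m-1}(i_r)$. Summing over all compositions of $n$ into $k$ positive parts reproduces the right-hand side of~\eqref{cmnk}, namely $g_m(n,k)$.

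I do not expect a genuine obstacle here. The only points that need care are the off-by-one shift between the length $n-1$ of a word and the index $n$ at which $f_{m-1}$ is evaluated (in particular, an empty block must correspond to $f_{m-1}(1)=1$, which holds since $f_0(1)=1$ is preserved by the invert transform), and the degenerate case $k=1$, in which $w$ contains no $x$ and the count is just $f_{m-1}(n)=g_m(n,1)$, as required.
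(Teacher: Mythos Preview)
Your argument is correct: splitting at the $k-1$ occurrences of $x$ gives a bijection between the words in question and $k$-tuples of admissible $\alpha$-words whose shifted lengths $i_r=|w_r|+1$ form a composition of $n$, and this recovers the defining sum~\eqref{cmnk} term by term. The off-by-one check and the verification that $f_{m-1}(1)=1$ (so empty blocks are allowed) are handled properly.

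As for comparison with the paper: there is nothing to compare. Proposition~\ref{prr} is only \emph{restated} here from \cite[Proposition~10]{ja3} and is not proved in the present paper. Your separator-and-blocks argument is the natural proof and is presumably what appears in the cited reference.
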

We denote by $G_m(n)$ the array $g_m(n,k)$ viewed  as a lower triangular matrix of order $n$.
It is proved in~\cite[Proposition 6]{ja3} that
\begin{equation}\label{CM}G_m(n)=G_1(n)\cdot L_n^{m-1}.\end{equation}
In particular, we have
\begin{equation}\label{gsum}g_m(n,k)=\sum_{i=k}^{n}{i-1\choose k-1}c_{m-1}(n,i),
\end{equation}
and
\begin{equation}\label{gf}\sum_{n=k}^\infty g_m(n,k)x^n=\left(\sum_{i=1}^{\infty}f_{m-1}(i)x^i\right)^k.
\end{equation}
\section{ A combinatorial result}
We start with a combinatorial proof that the sequence $C_0,C_1,\ldots$ of the Catalan numbers  is the invert transform of the sequence $\mathbb F_1,\mathbb F_2, \ldots$ of the Fine numbers with $\mathbb F_1=1$.
We define $f_0(n)=\mathbb F_{n},(n\geq 1)$.
 Thus, $f_0(1)=1,f_0(2)=0,f_0(3)=1,$ and so on.  It is a well-known fact that  $\mathbb
 F_{n}$ is the number of Dyck paths  of semi-length $n-1$ with no hills.

All investigation in the paper are based on the following result.
\begin{theorem}\label{te} For $m\geq 1$, we have
\begin{enumerate}
\item The value of $g_m(n,k)$ is the number of Dyck paths of semilength $n-1$ having hills in $m$ colors, of which  $k-1$ are in color $m$.
\item The value of $f_m(n)$ is the number of Dyck paths of semilength $n-1$ having hills in $m$ colors.
\end{enumerate}
\end{theorem}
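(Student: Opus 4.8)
The plan is to prove both items simultaneously by induction on $m$, using Proposition~\ref{prr} as the engine that converts a word-counting statement at level $m-1$ into one at level $m$. The key observation is that Proposition~\ref{prr} needs a combinatorial model in which $f_{m-1}(n)$ counts words of length $n-1$ over some alphabet, and then it tells us that $g_m(n,k)$ counts words of length $n-1$ over that alphabet augmented by one new letter $x$, with $x$ used exactly $k-1$ times; summing over $k$ via~\eqref{ffm} then gives the model for $f_m$. So the real content is a bijection between Dyck paths with hills in $m$ colors and words of a suitable length over a suitable alphabet.

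**The base case:**

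For $m=1$: I need to show $g_1(n,k)$ counts Dyck paths of semilength $n-1$ whose hills — here there is only one color available, color $1$ — number exactly $k-1$, and that $f_1(n)$ counts all Dyck paths of semilength $n-1$ (hills in one color imposing no real restriction beyond being a Dyck path). Since $f_0(n) = \mathbb{F}_n$ is the number of Dyck paths of semilength $n-1$ with no hills, I first need a combinatorial encoding of hill-free Dyck paths of semilength $n-1$ as words of length $n-1$ over some alphabet $\alpha$; the standard approach is to decompose a Dyck path into its sequence of "arches" (returns to the $x$-axis) and recursively encode, or to use the classical correspondence between hill-free Dyck paths and some family of words. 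Then Proposition~\ref{prr} with the new letter $x$ marking hills gives item (1) for $m=1$ directly, and~\eqref{ffm} gives item (2). The main subtlety here is making the word-encoding of hill-free Dyck paths explicit enough that the "append an $x$" operation of Proposition~\ref{prr} corresponds exactly to "insert a hill."

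**The inductive step and the main obstacle:**

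Assume item (2) holds at level $m-1$: $f_{m-1}(n)$ is the number of Dyck paths of semilength $n-1$ with hills in $m-1$ colors. I must present these paths as words of length $n-1$ over a finite alphabet in order to apply Proposition~\ref{prr}. The natural alphabet is (letters encoding the hill-free "skeleton") $\cup$ ($m-1$ letters for the $m-1$ hill colors); the encoding should send each colored-hill Dyck path to the word recording, in path order, either the skeleton-letter at that position or which color of hill sits there. Applying Proposition~\ref{prr} with a fresh letter $x$ — interpreted as "a hill in the new color $m$" — then shows $g_m(n,k)$ counts words over the enlarged alphabet with $x$ appearing $k-1$ times, i.e. Dyck paths of semilength $n-1$ with hills in $m$ colors of which exactly $k-1$ are in color $m$. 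Summing over $k$ via~\eqref{ffm} yields item (2) at level $m$, closing the induction.

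The main obstacle will be pinning down the word-encoding precisely so that "appending a new letter $x$ at the end of a length-$(n-1)$ word" really is the same combinatorial operation as "inserting a hill in a new color into a Dyck path of semilength $n-1$," producing a Dyck path of semilength $n$. One must check that every colored-hill Dyck path of semilength $n$ arises exactly once this way — in particular that the decomposition (remove the first/last hill of the highest new color, or peel off a boundary arch) is well-defined and inverts the insertion. I expect the cleanest route is: decompose a Dyck path of semilength $n-1$ into maximal hill-free pieces separated by hills, encode each piece by the (inductively available) word encoding of a hill-free path together with a color symbol, concatenate, and verify that this is a length-preserving bijection compatible with the invert-transform recursion underlying Proposition~\ref{prr}. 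Once that compatibility is verified, items (1) and (2) follow formally.
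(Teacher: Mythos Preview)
Your plan is correct and coincides with the paper's argument: both proceed by induction on $m$, invoking Proposition~\ref{prr} with the new letter $x$ interpreted as a hill of color $m$, and then summing via~\eqref{ffm}. The paper's version is terser---it does not spell out the word encoding you worry about---but the underlying bijection (cut a colored-hill Dyck path at its color-$m$ hills into $k$ pieces, each a Dyck path with hills in $m-1$ colors) is exactly the one you arrive at in your final paragraph; your one confused sentence about ``appending $x$ at the end'' and passing from semilength $n-1$ to $n$ should simply be dropped, since the letter $x$ may sit anywhere in the word and the semilength stays fixed.
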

\begin{proof} We use induction on $m$. We have $f_0(n)=\mathbb F_n$. It is well-known that
$f_0(n)$ is the number of Dyck paths of semilength $n-1$ with no hills. If we consider the symbol $x$ in Proposition \ref{prr} as a hill (of color $1$), then the first  assertion holds for $m=1$.
The second assertion holds by (\ref{ffm}).

Assume that the assertion is true for $m>1$, that is,
Assume that $f_{m-1}(n)$ equals the number  of Dyck paths of semilength $n-1$ having hills in $m-1$ colors. Since $f_{m-1}(1)=1$ and since the empty Dyck path has no hills, we may apply (\ref{ffm}) to obtain the assertion.
\end{proof}

We state two particular cases. Firstly, for $m=1$, the value of $f_1(n)$ is the number of Dyck paths of semilength $n-1$, which equals the Catalan number $C_{n-1}$.
Hence,
\begin{equation*}f_1(n)=C_{n-1}.\end{equation*}

Since  $f_1(1),f_1(2),\ldots$ is the invert transform of $f_0(1),f_0(2),\ldots$, we obtain the following relation between Fine and Catalan numbers.
\begin{corollary} The sequence $C_0,C_1,\ldots$ of the Catalan numbers is the invert transform of the sequence $\mathbb F_1,\mathbb F_2,\ldots$ of the Fine numbers.
\end{corollary}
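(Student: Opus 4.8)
The plan is to read the corollary off from Theorem~\ref{te} and the definitions recalled in the introduction, with essentially no extra work. First I would observe that, by construction, $f_0(n)=\mathbb F_n$ for every $n\geq1$, so the sequence $f_0(1),f_0(2),\ldots$ is exactly the Fine sequence $\mathbb F_1,\mathbb F_2,\ldots$; and by the definition of the family $f_m$, the function $f_1$ is the (first) invert transform of $f_0$. Hence proving the corollary amounts to showing that $f_1(n)=C_{n-1}$ for all $n\geq1$, i.e.\ that $f_1(1),f_1(2),\ldots=C_0,C_1,C_2,\ldots$.

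For that identification I would invoke part~(2) of Theorem~\ref{te} with $m=1$: $f_1(n)$ counts the Dyck paths of semilength $n-1$ whose hills are colored with a single color. Since a path has only one way to have all of its hills painted in one prescribed color, this number is just the total number of Dyck paths of semilength $n-1$, which is $C_{n-1}$. Equivalently, one may combine part~(1) of Theorem~\ref{te} with \eqref{ffm}: summing $g_1(n,k)$ over $k$ counts Dyck paths of semilength $n-1$ with any number of color-$1$ hills, i.e.\ all such Dyck paths. Either way, $f_1(n)=C_{n-1}$, and therefore the invert transform of $\mathbb F_1,\mathbb F_2,\ldots$ is $C_0,C_1,C_2,\ldots$, as claimed.

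I do not anticipate a genuine obstacle; the only points to watch are the index shift (the arithmetic function $f_1$ is indexed from $1$ while the Catalan sequence in the statement is indexed from $0$, so the matching is $f_1(n)\leftrightarrow C_{n-1}$) and the trivial remark that coloring with one color imposes no restriction. As a fully independent check one could instead argue at the level of generating functions: \eqref{gf} together with \eqref{ffm} gives $\sum_{n\geq1}f_1(n)x^n=\sum_{k\geq1}\bigl(\sum_{i\geq1}\mathbb F_i x^i\bigr)^k=\dfrac{A(x)}{1-A(x)}$ with $A(x)=\sum_{i\geq1}\mathbb F_i x^i$, which is precisely the generating-function form of the invert transform of the Fine sequence, and then one verifies $A(x)/(1-A(x))=xC(x)$ using the standard algebraic relation between the Fine and Catalan generating functions; but the combinatorial argument above is shorter and stays entirely within the tools already set up in the paper.
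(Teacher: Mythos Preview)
Your proposal is correct and follows essentially the same route as the paper: apply Theorem~\ref{te} with $m=1$ to conclude $f_1(n)=C_{n-1}$, and then note that by definition $f_1$ is the invert transform of $f_0=(\mathbb F_n)_{n\ge1}$. The paper makes exactly these two observations in the paragraph preceding the corollary.
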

From~\cite[Identity 12]{ja3}, by the use of the identity  $i\cdot{i-1\choose k-1}=k{i\choose k}$, we obtain the following identity relating the Fine and the Catalan numbers via the partial Bell polynomials.
\begin{identity}
\begin{equation*}(k-1)!B_{n,k}(C_0,2!\cdot C_1,3!\cdot C_2,\ldots)=\sum_{i=k}^n{i\choose
k}(i-1)!B_{n,k}({\mathbb F}_1,2!\cdot {\mathbb F}_2,3!\cdot {\mathbb F}_3,\ldots).
\end{equation*}
\end{identity}
\section{Catalan triangle $g_2(n,k)$}
According to Theorem \ref{te}, we have
\begin{corollary}
\begin{enumerate}
\item
The value of $g_1(n,k)$ is the number of Dyck paths of semilength $n-1$ having $k-1$ hills.
\item The number of Dyck paths of semilngth $n-1$ equals $f_1(n)$.
\end{enumerate}
\end{corollary}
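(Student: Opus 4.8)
The plan is to read both statements off directly from Theorem~\ref{te} by specializing to $m=1$, so no new argument is really needed. For part (1), Theorem~\ref{te}(1) with $m=1$ states that $g_1(n,k)$ counts Dyck paths of semilength $n-1$ whose hills are colored from a palette consisting of the single color $1$, of which exactly $k-1$ carry that color. Since there is only one available color, every hill necessarily receives it, so this count coincides with the number of Dyck paths of semilength $n-1$ having exactly $k-1$ hills. For part (2), Theorem~\ref{te}(2) with $m=1$ states that $f_1(n)$ counts Dyck paths of semilength $n-1$ with hills in one color; as coloring with a one-element palette imposes no constraint beyond being a Dyck path, $f_1(n)$ is simply the total number of Dyck paths of semilength $n-1$. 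Equivalently, one may quote the already-recorded identity $f_1(n)=C_{n-1}$ together with the classical fact that $C_{n-1}$ enumerates Dyck paths of semilength $n-1$, and then sum part (1) over $k$ using (\ref{ffm}).

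The only point deserving an explicit word is the passage from the colored formulation of Theorem~\ref{te} to the uncolored one, namely that ``coloring the hills with a palette of size one'' is a trivial operation: there is an obvious bijection between Dyck paths of semilength $n-1$ with $k-1$ hills and such paths in which all hills are assigned the color $1$. Once this triviality is noted, both assertions are immediate consequences of Theorem~\ref{te}, so I do not anticipate any genuine obstacle — the mathematical content is entirely contained in that theorem and in the previously established equality $f_1(n)=C_{n-1}$.
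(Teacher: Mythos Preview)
Your proposal is correct and follows exactly the paper's approach: the paper simply states the corollary after ``According to Theorem~\ref{te}, we have,'' treating it as an immediate specialization to $m=1$. Your additional explanation of why one-color hills amount to uncolored hills just makes explicit the triviality the paper leaves implicit.
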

We  conclude that
\begin{equation*}f_1(n)=C_{n-1},\end{equation*} where $C_{n-1}$ is the Catalan number.

 An explicit formula for $g_1(n,k)$ will be derived later.

The Segner's formula means that the sequence $C_1,C_2,\ldots$ of the Catalan numbers is the invert transform of the sequence $C_0,C_1,\ldots$. This yields that $f_2(n)=C_n$, for all $n$.
We thus obtain the following combinatorial interpretation of the Catalan numbers.
\begin{corollary} The Catalan number $C_n$ is the number of Dyck paths of semilength $n-1$ having hills in two colors.
\end{corollary}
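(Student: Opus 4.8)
The plan is to reduce the statement to the single identity $f_2(n)=C_n$, after which part~(2) of Theorem~\ref{te} with $m=2$ does the rest: that part says $f_2(n)$ counts the Dyck paths of semilength $n-1$ having hills in two colors, so the corollary is immediate once $f_2(n)=C_n$ is known.

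To prove $f_2(n)=C_n$ I would work with generating functions. Set $F_i(x)=\sum_{n\ge 1}f_i(n)x^n$. Combining~(\ref{ffm}) with~(\ref{gf}) gives $F_m(x)=\sum_{k\ge 1}F_{m-1}(x)^k=\frac{F_{m-1}(x)}{1-F_{m-1}(x)}$, the generating-function form of the invert transform. Since it has already been shown that $f_1(n)=C_{n-1}$, we have $F_1(x)=x\,C(x)$, where $C(x)=\sum_{j\ge 0}C_jx^j$ is the Catalan series, which satisfies $C(x)=1+xC(x)^2$. Substituting, $F_2(x)=\frac{xC(x)}{1-xC(x)}$, and the relation $xC(x)^2=C(x)-1$ collapses the right-hand side to $C(x)-1=\sum_{n\ge 1}C_nx^n$. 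Comparing coefficients of $x^n$ gives $f_2(n)=C_n$ for $n\ge 1$, as wanted.

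A purely combinatorial alternative is to invoke Segner's first-return decomposition of a nonempty Dyck path as $U\,P\,D\,Q$ with $P,Q$ Dyck paths; this is precisely the assertion that the sequence $C_1,C_2,\dots$ is the invert transform of $C_0,C_1,\dots$, the route already flagged just before the corollary. In either approach the only delicate point is the index shift (that $f_1(n)=C_{n-1}$ rather than $C_n$), which must be tracked carefully through the invert transform; once that is in place the algebraic step using $C(x)=1+xC(x)^2$ is routine, and Theorem~\ref{te}(2) delivers the combinatorial interpretation with no further work.
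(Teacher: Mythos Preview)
Your proposal is correct and matches the paper's own argument: the paper simply observes, just before the corollary, that Segner's formula means $C_1,C_2,\ldots$ is the invert transform of $C_0,C_1,\ldots$, hence $f_2(n)=C_n$, and then Theorem~\ref{te}(2) gives the interpretation. Your generating-function derivation is just the analytic form of the same step (the identity $C(x)=1+xC(x)^2$ \emph{is} Segner's recurrence), and you even flag the Segner route explicitly as the alternative, so there is no substantive difference.
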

\begin{remark} Note that this property of Catalan number is equivalent to Stanley~\cite[BE.52]{rst}.
\end{remark}
We also have  the following identity relating the Catalan numbers and the partial Bell polynomials.
\begin{identity}
\begin{equation*}(k-1)!B_{n,k}(C_1,2!\cdot C_1,3!\cdot C_3,\ldots)=\sum_{i=k}^n{i\choose
k}(i-1)!B_{n,k}(C_0,2!\cdot C_1,3!\cdot C_2,\ldots).
\end{equation*}
\end{identity}

We firstly derive an explicit formula for $g_2(n,k)$.
It is known that \begin{equation*}g(x)=\frac{1-\sqrt{1-4x}}{2x}\end{equation*} is the ordinary generating function for the sequence $C_0,C_1,\ldots$. It follows from (\ref{gf}) that
$\sum_{n=k}^\infty g_2(n,k)x^n$ is the expansion of $[xg(x)]^k$ into powers of $x$. Using the binomial theorem and the expansion of a binomial series, we obtain
\begin{equation*}\sum_{n=k}^\infty g_2(n,k)x^n=
\sum_{j=0}^\infty\left[\sum_{i=0}^k(-1)^{i+j}{k\choose i}\frac{\prod_{t=0}^{j-1}(i-2t)}{2^{k-j}\cdot j!}\right]x^j.
\end{equation*}
Comparing coefficients of the same powers of $x$, we firstly obtain
\begin{identity} If $n<k$, then
\begin{equation*}\sum_{i=0}^k(-1)^{i+n}{k\choose i}\prod_{t=0}^{n-1}(i-2t)=0.\end{equation*}
\end{identity}

The case $n\geq k$ yields

\begin{equation*}g_2(n,k)=\frac{2^{n-k}}{n!}\sum_{i=1}^k(-1)^{i+n}{k\choose i}\prod_{t=0}^{n-1}(i-2t).
\end{equation*}

It is clear that $\prod_{t=0}^{n-1}(i-2t)=0$, if $i$ is even. If $i$ is odd, we denote $i=2j-1,(1\leq j\leq\lfloor\frac{k+1}{2}\rfloor$. Hence
\begin{proposition} The following formula holds:
\begin{equation}\label{pr1}
g_2(n,k)=\frac{2^{n-k}}{n!}\sum_{j=1}^{\lfloor\frac{k+1}{2}\rfloor}
(-1)^{j-1}{k\choose 2j-1}\cdot (2j-1)!!\cdot(2n-2j-1)!!.
\end{equation}
\end{proposition}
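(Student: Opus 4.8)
The plan is to start from the formula
\[
g_2(n,k)=\frac{2^{n-k}}{n!}\sum_{i=1}^k(-1)^{i+n}{k\choose i}\prod_{t=0}^{n-1}(i-2t),
\]
which was already obtained above by extracting the coefficient of $x^n$ in $[xg(x)]^k$ in the range $n\ge k$. From here the proposition is essentially a bookkeeping exercise: evaluate the product $\prod_{t=0}^{n-1}(i-2t)$ in closed form and then tidy up the summation index and the signs.

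First I would discard the even values of $i$. If $i$ is even with $2\le i\le k\le n$, then $t=i/2$ lies in $\{0,1,\dots,n-1\}$, so the factor $i-2t$ is zero and the whole product vanishes. Hence only odd $i$ survive; writing $i=2j-1$, the parameter $j$ runs over $1,\dots,\lfloor(k+1)/2\rfloor$, which is precisely the summation range appearing in (\ref{pr1}).

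Next I would factor $\prod_{t=0}^{n-1}(2j-1-2t)$ by cutting the product at $t=j-1$. For $0\le t\le j-1$ the factors are the positive odd numbers $2j-1,2j-3,\dots,3,1$, whose product is $(2j-1)!!$. For $j\le t\le n-1$ there are $n-j$ factors, all negative, namely $-1,-3,\dots,-(2n-2j-1)$, whose product is $(-1)^{n-j}(2n-2j-1)!!$, with the convention $(-1)!!=1$ handling the degenerate case (which occurs only when $n=j$, essentially the case $n=1$). Thus $\prod_{t=0}^{n-1}(2j-1-2t)=(-1)^{n-j}(2j-1)!!\,(2n-2j-1)!!$.

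Finally I would assemble the sign: $(-1)^{i+n}=(-1)^{2j-1+n}=(-1)^{n-1}$, so the total sign on the $j$-th term is $(-1)^{n-1}(-1)^{n-j}=(-1)^{2n-1-j}=(-1)^{j-1}$. Substituting back into the displayed formula yields (\ref{pr1}) verbatim. There is no genuine obstacle here; the only points that need care are verifying that the even-$i$ terms really vanish for every admissible pair $(n,k)$ (this is where $i\le k\le n$ is used), keeping the double-factorial conventions straight at the boundary $j=n$, and getting the parity arithmetic for the combined sign exactly right.
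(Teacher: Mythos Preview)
Your proposal is correct and follows exactly the paper's approach: the paper derives the intermediate formula with $\prod_{t=0}^{n-1}(i-2t)$, observes that even $i$ make the product vanish, substitutes $i=2j-1$, and then simply writes ``Hence'' before stating the proposition. You have merely made explicit the evaluation of the product as $(-1)^{n-j}(2j-1)!!\,(2n-2j-1)!!$ and the sign bookkeeping that the paper leaves to the reader; there is no difference in strategy.
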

In particular, since $g_2(n,1)=f_1(n)=C_{n-1}$, we obtain the following result.
\begin{corollary}For $n>1$, we have
\begin{equation*}C_{n-1}=\frac{2^{n-1}}{n!}\cdot(2n-3)!!.
\end{equation*}
\end{corollary}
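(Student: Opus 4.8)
The plan is to read this off as the special case $k=1$ of the formula~(\ref{pr1}) for $g_2(n,k)$, combined with the fact that $g_2(n,1)=C_{n-1}$. First I would recall from the defining sum~(\ref{cmnk}) that taking $k=1$ collapses the convolution to a single term, so $g_m(n,1)=f_{m-1}(n)$; in particular $g_2(n,1)=f_1(n)$, and we have already established $f_1(n)=C_{n-1}$. Hence the corollary amounts to evaluating the right-hand side of~(\ref{pr1}) at $k=1$.

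Next I would carry out that substitution. With $k=1$ the outer summation in~(\ref{pr1}) ranges over $1\le j\le\lfloor\frac{k+1}{2}\rfloor=\lfloor 1\rfloor=1$, so only the term $j=1$ appears. That term equals $(-1)^{0}\binom{1}{1}(2\cdot 1-1)!!\,(2n-2\cdot 1-1)!!=1\cdot 1\cdot 1!!\cdot(2n-3)!!=(2n-3)!!$. Therefore $g_2(n,1)=\dfrac{2^{n-1}}{n!}(2n-3)!!$, and comparing with $g_2(n,1)=C_{n-1}$ yields the claimed identity for all $n>1$.

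As a consistency check (not logically needed, but worth recording), I would match this against the classical formula $C_{n-1}=\frac1n\binom{2n-2}{n-1}$ using $(2n-3)!!=\dfrac{(2n-2)!}{2^{n-1}(n-1)!}$: substituting this into $\dfrac{2^{n-1}}{n!}(2n-3)!!$ gives $\dfrac{(2n-2)!}{n!\,(n-1)!}=\dfrac1n\binom{2n-2}{n-1}=C_{n-1}$. There is essentially no obstacle here; the only points requiring a little care are that the floor $\lfloor\frac{k+1}{2}\rfloor$ is exactly $1$ at $k=1$ (so the sum is nonempty with a single summand) and that the double-factorial conventions are applied correctly ($1!!=1$, and $(2n-3)!!$ is well defined precisely for $n\ge 2$, which is why the statement is restricted to $n>1$). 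All the real content is already in Proposition~(\ref{pr1}).
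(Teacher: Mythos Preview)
Your proof is correct and follows exactly the paper's approach: the corollary is stated immediately after Proposition~\ref{pr1} with the remark that $g_2(n,1)=f_1(n)=C_{n-1}$, and your argument simply makes explicit the specialization $k=1$ in~(\ref{pr1}), which collapses the sum to the single term $(2n-3)!!$. The additional consistency check against $\frac{1}{n}\binom{2n-2}{n-1}$ is a nice touch but, as you note, not logically required.
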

\begin{remark} The preceding is the famous Euler's formula for the Catalan numbers.
\end{remark}

We now prove that $g_2(n,k)$ satisfies a simple recurrence relation.
\begin{proposition}\label{rf} For $1\leq k<n$, the following recurrence holds:
\begin{equation}\label{cik}g_2(n+1,k+1)=g_2(n+1,k+2)+g_2(n,k).\end{equation}
\end{proposition}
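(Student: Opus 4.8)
The plan is to convert~(\ref{cik}) into an identity of ordinary generating functions and then read it off from the quadratic satisfied by the Catalan series. Recall $g(x)=\frac{1-\sqrt{1-4x}}{2x}$ and put
\[
B(x):=x\,g(x)=\frac{1-\sqrt{1-4x}}{2}=\sum_{n\ge 1}C_{n-1}x^{n}.
\]
Since $f_1(n)=C_{n-1}$, formula~(\ref{gf}) with $m=2$ gives $\sum_{n\ge k}g_2(n,k)\,x^{n}=B(x)^{k}$ for every $k\ge 1$. Hence~(\ref{cik}) is equivalent, term by term, to the power-series identity
\[
B^{k+1}-B^{k+2}=x\,B^{k}.
\]

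The whole argument then rests on the functional equation for $B$. Squaring $2B=1-\sqrt{1-4x}$ yields $1-4x=(1-2B)^{2}$, so $x=B-B^{2}=B(1-B)$ (equivalently $B=x+B^{2}$). Multiplying this by $B^{k}$ gives
\[
x\,B^{k}=B^{k}\cdot B(1-B)=B^{k+1}-B^{k+2},
\]
which is precisely the identity above. Extracting the coefficient of $x^{n+1}$ from both sides, and using $\sum_{n\ge k}g_2(n,k)x^{n}=B^{k}$, produces $g_2(n+1,k+1)-g_2(n+1,k+2)=g_2(n,k)$, as desired.

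I do not expect a genuine obstacle here: the one observation that makes it work is recognizing $x\,g(x)$ as the shifted Catalan series $B$ and using its defining relation $x=B(1-B)$ to collapse $B^{k+1}-B^{k+2}$ into $x\,B^{k}$. The only points worth a careful sentence are bookkeeping: the manipulations are legitimate because $B$ is a formal power series with zero constant term, and in the stated range $1\le k<n$ all three entries $g_2(n+1,k+1)$, $g_2(n+1,k+2)$, $g_2(n,k)$ are properly indexed in the lower-triangular array (the middle term being the one that may vanish, consistently with the recurrence). If a purely combinatorial proof were wanted instead, one could interpret $g_2(n,k)$ as counting $k$-tuples of Dyck paths of total semilength $n-k$ (or via the words of Proposition~\ref{prr}) and construct a bijective argument, but the generating-function route above is shorter and uses only~(\ref{gf}).
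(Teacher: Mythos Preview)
Your proof is correct and takes a genuinely different route from the paper's. The paper works directly with the convolution definition~(\ref{cmnk}): it writes
\[
g_2(n+1,k+1)=\sum_{i_1+\cdots+i_{k+1}=n+1}C_{i_1-1}\cdots C_{i_{k+1}-1},
\]
splits off the terms with $i_{k+1}=1$ (which immediately give $g_2(n,k)$), and then applies Segner's recurrence $\sum_{i=1}^{m-1}C_{i-1}C_{m-1-i}=C_{m-1}$ to collapse the remaining terms (those with $i_{k+1}>1$) into $g_2(n+1,k+2)$. Your argument packages the same content at the level of generating functions: the functional equation $B=x+B^{2}$ is precisely the generating-function form of Segner's formula, and multiplying it by $B^{k}$ is the analogue of the paper's splitting of the last part. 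The generating-function route is shorter and proves the identity for all $n$ simultaneously; the paper's approach is more hands-on, makes the bijective content visible term by term, and does not rely on~(\ref{gf}) or on the closed form of $B$.
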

\begin{proof} According to (\ref{cmnk}), we have
\begin{equation}\label{drc}g_2(n+1,k+1)=\sum_{i_1+i_2+\cdots+i_{k+1}=n+1}C_{i_1-1}\cdot C_{i_2-1}\cdots C_{i_{k+1}-1},\end{equation}
where the sum is taken over positive $i_t$.

Firstly, we extract the term obtained for $i_{k+1}=1$.
Since  $C_{i_{k+1}-1}=C_0=1$, we obtain
\[\sum_{i_1+i_2+\cdots i_k=n}C_{i_1-1}\cdot C_{i_2-1}\cdots C_{i_{k}-1}=C_2(n,k),\]
which is the second term on the right-hand side in formula (\ref{rf}).
It remains to calculate the sum on the right-hand side of Equation (\ref{drc}), when
$i_{k+1}>1$.
We consider the equation
\begin{equation*}g_2(n+1,k+2)=\sum_{j_1+j_2+\cdots+j_{k+1}+j_{k+2}=n+1}C_{j_1-1}\cdot C_{j_2-1}\cdots C_{j_{k+1}-1}\cdot C_{j_{k+2}-1}.\end{equation*}  Denote $j_{k+1}+j_{k+2}=i_{k+1}>1$.
This equation is fulfilled for the following pairs of $(j_{k+1},j_{k+2})$:
\begin{equation*}\{(1,i_{k+1}-1),(2,i_{k+1}-2),\ldots,(i_{k+1}-1,1)\}.\end{equation*}.
 We rearrange terms in
the sum as follows:
\[g_2(n+1,k+2)=\sum_{j_1+\cdots+j_k+i_{k+1}=n+1}C_{j_1-1}C_{j_2-1}\cdots
C_{j_{k}-1}\cdot\sum_{i=1}^{i_{k+1}-1}C_{i-1}C_{i_{k+1}-1-i}.\]
Segner's formula implies  $\sum_{i=1}^{i_{k+1}-1}C_{i-1}C_{i_{k+2}-1-i}=C_{i_{k+1}-1}$. We
thus obtain
\[g_2(n+1,k+2)=\sum_{j_1+\cdots+j_k+i_{k+1}=n+1}C_{j_1-1}C_{j_2-1}\cdots
C_{j_{k}-1}\cdot C_{i_{k+1}-1},\] for $i_{k+1}>1$, which is the first term in Equation (\ref{cik}).
\end{proof}
We now prove that the following vertical recurrence holds:
\begin{proposition} For $n,k>1$, we have
\begin{equation}\label{rr1}
g_2(n,k)=\sum_{i=k-1}^{n-2}g_2(n-1,i)+1.
\end{equation}
\end{proposition}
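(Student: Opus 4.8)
The plan is to obtain the vertical recurrence~(\ref{rr1}) by telescoping the horizontal recurrence of Proposition~\ref{rf}. Rewriting~(\ref{cik}) with $n$ replaced by $n-1$ and $k$ by $m-1$, the hypothesis $1\le k<n$ becomes $2\le m\le n-1$, and we get
\[
g_2(n,m)=g_2(n,m+1)+g_2(n-1,m-1),\qquad 2\le m\le n-1.
\]
Before telescoping I would record the boundary value $g_2(n,n)=1$: by~(\ref{cmnk}) the number $g_2(n,n)$ is the sum of $C_{i_1-1}\cdots C_{i_n-1}$ over $n$-tuples of positive integers with $i_1+\cdots+i_n=n$, and the only such tuple is $(1,1,\dots,1)$, contributing $C_0^{\,n}=1$.

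Next, assuming $1<k<n$, I would sum the displayed recurrence over $m=k,k+1,\dots,n-1$. The left-hand sides contribute $\sum_{m=k}^{n-1}g_2(n,m)$, the first terms on the right contribute $\sum_{m=k}^{n-1}g_2(n,m+1)=\sum_{m=k+1}^{n}g_2(n,m)$, and these two telescope to $g_2(n,k)-g_2(n,n)$; the second terms on the right contribute $\sum_{m=k}^{n-1}g_2(n-1,m-1)=\sum_{i=k-1}^{n-2}g_2(n-1,i)$. Combining these with $g_2(n,n)=1$ gives exactly~(\ref{rr1}). The remaining case $k=n$ is immediate, since then the right-hand side of~(\ref{rr1}) is an empty sum plus $1$, which equals $g_2(n,n)$.

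The only point requiring care is the index bookkeeping — verifying that the horizontal recurrence is valid for precisely the values $m=k,\dots,n-1$ used in the telescoping, and handling $k=n$ separately — but this presents no real obstacle. One could alternatively give a combinatorial argument from Theorem~\ref{te}, classifying Dyck paths of semilength $n-1$ with two colours of hills; however, the telescoping argument is considerably shorter, so that is the route I would take.
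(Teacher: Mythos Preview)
Your proof is correct and follows essentially the same route as the paper: both obtain the vertical recurrence by telescoping the horizontal recurrence of Proposition~\ref{rf}, together with the boundary value $g_2(n,n)=1$. Your telescoping (summing $m$ from $k$ up to $n-1$) is in fact a little more direct than the paper's, which first sums from the bottom to get $\sum_{i=1}^{k} g_2(n,i)=g_2(n+1,2)-g_2(n+1,k+2)$ and then subtracts two instances of this identity.
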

\begin{proof}
From Proposition \ref{rf}, we obtain the following sequence of equations.
\begin{align*}
g_2(n+1,3)&=g_2(n+1,2)-g_2(n,1),\\
g_2(n+1,4)&=g_2(n+1,3)-g_2(n,2),\\&\vdots\\
g_2(n+1,k+2)&=g_2(n+1,k+1)-g_2(n,k).
\end{align*}
Adding terms on the left-hand sides of these equations, and those on the right-hand sides, we obtain
\begin{equation*}
\sum_{i=1}^kg_2(n,i)=g_2(n+1,2)-g_2(n+1,k+2).
\end{equation*}
Replacing $n$ by $n-1$, and $k$ by $k-2,(k>2)$, we obtain
\begin{equation*}g_2(n,2)=g_2(n,k)+\sum_{i=1}^{k-2}g_2(n-1,i).\end{equation*}
In particular, for $k=n$, this equation becomes
\begin{equation*}g_2(n,2)=\sum_{i=1}^{n-2}g_2(n-1,i)+1,
\end{equation*}
and the formula follows.
\end{proof}
We now derive a simpler explicit formula for $g_2(n,k)$.
\begin{proposition} The following formula holds:
$g_2(n,n)=1$, and
\begin{equation}\label{g2nk}g_2(n,k)=
\frac{k\prod_{i=1}^{n-k-1}(n+i)}{(n-k)!},
\end{equation}
otherwise.
Equivalently,
\begin{equation}\label{g2}g_2(n,k)=\frac{k}{n-k}{2n-k-1\choose n},(n>k).
\end{equation}
\end{proposition}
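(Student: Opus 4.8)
The plan is to verify the closed form by showing that the expression on the right of (\ref{g2}) satisfies the recurrence and the boundary data that already pin down $g_2(n,k)$. It is convenient to collect everything over a single factorial denominator: set
\[
h(n,k)=\frac{k\,(2n-k-1)!}{n!\,(n-k)!}\qquad(1\le k\le n).
\]
One checks immediately that $h(n,n)=\dfrac{n\,(n-1)!}{n!\,0!}=1$, that $h(n,k)=\dfrac{k}{n-k}\binom{2n-k-1}{n}$ for $k<n$ since $(n-k)(n-k-1)!=(n-k)!$, and that $h(n,k)=\dfrac{k\prod_{i=1}^{n-k-1}(n+i)}{(n-k)!}$ because $\prod_{i=1}^{n-k-1}(n+i)=(2n-k-1)!/n!$. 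Hence proving $g_2=h$ simultaneously yields all three displayed formulas, including the diagonal value $g_2(n,n)=1$.

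First I would fix the two boundary cases. From (\ref{cmnk}) the only composition of $n$ into $n$ positive parts is $(1,\dots,1)$, so $g_2(n,n)=C_0^{\,n}=1=h(n,n)$; and, as already noted in the text, $g_2(n,1)=f_1(n)=C_{n-1}$, while a one-line simplification gives $h(n,1)=\dfrac1{n-1}\binom{2n-2}{n}=\dfrac1n\binom{2n-2}{n-1}=C_{n-1}$. Next I would check that $h$ satisfies the recurrence (\ref{cik}). Rewriting (\ref{cik}) as $g_2(n,k)=g_2(n,k+1)+g_2(n-1,k-1)$ for $2\le k\le n-1$, it suffices to prove the identity $h(n,k)-h(n,k+1)=h(n-1,k-1)$; factoring out the common quantity $(2n-k-2)!/(n!\,(n-k)!)$ from the three terms reduces this to the polynomial identity
\[
k(2n-k-1)-(k+1)(n-k)=n(k-1),
\]
which is immediate, and the same reduction stays valid at the endpoint $k+1=n$, where $h(n,n)=1$.

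With the boundaries matched and $h$ obeying the recurrence, the equality $g_2=h$ follows by induction on $n$. For the inductive step, assuming $g_2$ and $h$ agree on row $n-1$, the entries $g_2(n,n)=1$ and $g_2(n,1)=C_{n-1}$ are already matched; running the rewritten recurrence downward in $k$ from $k=n-1$ to $k=2$ then expresses each remaining $g_2(n,k)$ as $g_2(n,k+1)+g_2(n-1,k-1)$, in which $g_2(n,k+1)$ has just been matched and $g_2(n-1,k-1)$ is known by the outer induction, so $g_2(n,k)=h(n,k+1)+h(n-1,k-1)=h(n,k)$. This completes row $n$ and hence the proof; the passage between (\ref{g2nk}) and (\ref{g2}) is the elementary identity $\prod_{i=1}^{n-k-1}(n+i)=(2n-k-1)!/n!$ recorded above.

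The main obstacle is entirely bookkeeping: keeping track of which entries the recurrence (\ref{cik}) can reach — it never produces the first column, which is exactly why $g_2(n,1)=C_{n-1}$ must be supplied separately — and verifying that the factorial form behaves correctly on the diagonal, where the binomial expression in (\ref{g2}) is not literally defined. I would also mention a purely generating-functional alternative: by (\ref{gf}), $\sum_{n\ge k}g_2(n,k)x^n=(xg(x))^k$, and $w:=xg(x)$ satisfies $w=x+w^2$, i.e. $x=w(1-w)$, so Lagrange inversion gives $g_2(n,k)=[x^n]w^k=\dfrac{k}{n}[w^{n-k}](1-w)^{-n}=\dfrac{k}{n}\binom{2n-k-1}{n-k}=h(n,k)$.
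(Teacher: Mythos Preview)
Your main argument---verify that the closed form satisfies the recurrence (\ref{cik}) and conclude by induction---is exactly the paper's approach; the paper computes $g_2(n+1,k+1)-g_2(n,k)$ and simplifies to $g_2(n+1,k+2)$, which is the same identity you check in the rearranged form $h(n,k)-h(n,k+1)=h(n-1,k-1)$. Your treatment is in fact more complete than the paper's, since you spell out the boundary data (the diagonal $g_2(n,n)=1$ and the first column $g_2(n,1)=C_{n-1}$) needed to anchor the induction, which the paper leaves implicit. The Lagrange-inversion alternative you sketch at the end is a genuinely different route not present in the paper; it bypasses the recurrence entirely and reads off the coefficient directly from $x=w(1-w)$, at the cost of invoking a tool outside the paper's elementary style.
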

\begin{proof} We use the recurrence (\ref{cik}). We have
\begin{align*}
g_2(n+1,k+1)-g_2(n,k)&=\frac{(n+2)\cdots(2n-k-1)\cdot[(k+1)(2n-k)-k(n+1)]}{(n-k)!}\\
&=\frac{(n+2)\cdots(2n-k-1)\cdot(k+2)(n-k)}{(n-k)!} \\
&=\frac{(k+2)(n+2)\cdots(2n-k-1)}{(n-k-1)!}=g_2(n+1,k+2).
\end{align*}
and the assertion follows by induction.
\end{proof}
From (\ref{rr1}), we obtain the following  identity:
\begin{identity} For $n>k$, we have
\begin{equation*}\frac{k}{n-k}\cdot{2n-k-1\choose n}=\sum_{i=k-1}^{n-2}\frac{i}{n-i-1}{2n-3-i\choose n-1}+1.\end{equation*}
\end{identity}

We denote by $A(n,k)$ the mirror triangle of $g_2(n,k)$. Hence, $A(n,k)=g_2(n,n-k+1)$.
\begin{proposition}
The triangle $A(n,k)$ satisfies the following conditions:
\begin{enumerate}
\item $A(n,1)=1, A(n,n)=C_{n-1}$.
\item $A(n+1,k+1)=A(n+1,k)+A(n,k+1)$,
    \item $A(n,n-1)=C_{n-1}$.
\end{enumerate}
\end{proposition}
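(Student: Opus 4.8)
The plan is to reduce every one of the three claims to facts about $g_2(n,k)$ that are already established in the excerpt, using only the defining substitution $A(n,k)=g_2(n,n-k+1)$. Nothing new needs to be proved from scratch; the whole proof is a change of variables together with one short binomial identity and a check of index ranges.

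First I would dispose of (1) and (3). For (1), the substitution gives $A(n,1)=g_2(n,n)=1$ (the value $g_2(n,n)=1$ is recorded just before formula~(\ref{g2nk})), and $A(n,n)=g_2(n,1)=f_1(n)=C_{n-1}$, which is the already-stated identity $g_2(n,1)=f_1(n)=C_{n-1}$. For (3), $A(n,n-1)=g_2(n,2)$, so it suffices to show $g_2(n,2)=C_{n-1}$ for $n>1$. I would do this from the closed form~(\ref{g2}): $g_2(n,2)=\tfrac{2}{n-2}\binom{2n-3}{n}=\tfrac{2(2n-3)!}{n!\,(n-2)!}$, and since $C_{n-1}=\tfrac{(2n-2)!}{n!\,(n-1)!}=\tfrac{2(n-1)(2n-3)!}{(n-1)\,n!\,(n-2)!}=\tfrac{2(2n-3)!}{n!\,(n-2)!}$, the two agree (the case $n=2$ being the boundary value $g_2(2,2)=1=C_1$). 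Alternatively one can avoid the algebra entirely: take $k=2$ in the vertical recurrence~(\ref{rr1}), getting $g_2(n,2)=\sum_{i=1}^{n-2}g_2(n-1,i)+1$, and then use~(\ref{ffm}) together with $f_2(n)=C_n$ to evaluate $\sum_{i=1}^{n-1}g_2(n-1,i)=f_2(n-1)=C_{n-1}$, so that $g_2(n,2)=C_{n-1}-g_2(n-1,n-1)+1=C_{n-1}$.

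Finally, for (2) I would apply the mirror substitution to each of the three entries: $A(n+1,k+1)=g_2(n+1,n-k+1)$, $A(n+1,k)=g_2(n+1,n-k+2)$, and $A(n,k+1)=g_2(n,n-k)$. Writing $j=n-k$, the asserted identity becomes $g_2(n+1,j+1)=g_2(n+1,j+2)+g_2(n,j)$, which is exactly the horizontal recurrence~(\ref{cik}) of Proposition~\ref{rf} with $k$ replaced by $j$. That recurrence holds for $1\le j<n$, i.e.\ for $1\le n-k<n$, which is the range $1\le k\le n-1$; for the remaining boundary value $k=n$ one reads the statement as $A(n+1,n+1)=A(n+1,n)+A(n,n+1)$, and with the usual convention that entries outside the triangle vanish this is $C_n=C_n+0$, using parts (1) and (3). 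The only real bookkeeping issue — and the one place a careless reader could slip — is keeping the index translation and the range of validity of~(\ref{cik}) consistent across the reflection, so I would state the admissible range of $k$ explicitly and note the degenerate boundary case separately; there is no genuine mathematical obstacle.
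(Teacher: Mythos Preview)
Your argument is correct, and parts~(1) and~(2) coincide with the paper's proof essentially verbatim: both simply unwind the mirror substitution $A(n,k)=g_2(n,n-k+1)$ and invoke the recurrence~(\ref{cik}).

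For part~(3) you take a different route. The paper evaluates $g_2(n,2)$ directly from the convolution definition~(\ref{cmnk}): $g_2(n,2)=\sum_{i_1+i_2=n}C_{i_1-1}C_{i_2-1}$, which is $\sum_{j=0}^{n-2}C_jC_{n-2-j}=C_{n-1}$ by Segner's formula. You instead pull the value out of the closed form~(\ref{g2}) (or, in your alternative, out of the vertical recurrence~(\ref{rr1}) combined with $f_2(n-1)=C_{n-1}$). Both arguments are short and elementary; the paper's has the slight advantage of being self-contained at the level of the definition and Segner, while yours leans on formulas that themselves were derived from~(\ref{cik}), so in a sense it is one step further downstream. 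Your added discussion of the boundary case $k=n$ in~(2) is a nice bit of hygiene that the paper omits.
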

\begin{proof}
\begin{enumerate}
\item  We have $A(n+1,1)=g_2(n+1,n+1)=f_0(1)^{n+1}=1$.
Also, $A(n,n)=g_2(n,1)=C_{n-1}$.
\item We have $A(n+1,k+1)=g_2(n+1,n-k+1)$. Using Proposition \ref{rf} yields
\begin{equation*}A(n+1,k+1)=g_2(n+1,n-k+2)+g_2(n,n-k)=A(n+1,k)+A(n,k+1).\end{equation*}
\item
We have $A(n,n-1)=g_2(n,2)$.  According to (\ref{cmnk}), we have
$g_2(n,2)=\sum_{i=1}^{n-2}C_{i-1}C_{n-i-2}$.
Applying the Segner's formula yields $A(n,n-1)=C_{n-1}$.
\end{enumerate}
\end{proof}
\begin{remark}
We note that the triangle $A(n,k)$ is the Catalan triangle, considered in~Koshy\cite[Chapter 15]{tk}. The chapter is devoted to a family of binary words.
\end{remark}
Comparing result which is obtained in this Chapter, and our result, we obtain the following   result:
\begin{identity}The following sets has the same number of elements:
\begin{enumerate}
\item The number of Dyck paths of semilength $n-1$ having hills in two  colors, of which $n-k$ hills in color $2$.
\item The number of binary words of length $n+k-2$ having $n-1$ ones and $k-1$ zeros and no initial segment has more zeros than ones.
\end{enumerate}
\end{identity}
We also  give a bijective proof.
\begin{proof} In a Dyck word of semilength $n-1$ with $n-k$ hills of color $2$, we replace each hill of color $2$ by $1$. Between two hills of color $2$ are the standard Dyck paths, which we interpret as binary words having the same number of zeros and ones, and no initial segment having more zeros that ones. In this way we obtain a binary words having $n-1$ ones and $k-1$ zeros, and no initial segment has more zeros then ones. It is clear that this correspondence is injective.

Conversely, if $w$ is a binary word satisfying 2.
Scanning from left to right, starting from the last zero, we find interval consisting of the same numbers of ones and zeros. This interval produces a standard Dyck path. If there ones behind the last zero, we replace each of them by the hill of color $2$.

Continuing the same procedure, we obtain  Dyck path of semilength $n-1$ having $n-k$ hills of color $2$. This correspondence is also bijective.

\end{proof}
\section{Catalan triangle $g_3(n,k)$}
From Theorem \ref{te}, we obtain
\begin{corollary}
\begin{enumerate}
\item
The value of $g_3(n,k)$ is the number of Dyck paths of semilength $n-1$ hills i three colors, of which $k-1$ hills in color $3$.
\item The number of Dyck paths of semilngth $n-1$ having hills in three colors equals $f_3(n)$.
\end{enumerate}
\end{corollary}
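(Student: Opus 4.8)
The plan is to read off both statements as the case $m=3$ of Theorem \ref{te}, which has already been established for all $m\geq 1$ by induction. Nothing genuinely new is required; what I would do is make the specialization explicit and unwind the single inductive step that introduces the third color, so that the combinatorial meaning of $g_3$ is transparent before any explicit formulas for it are derived.

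The steps, in order, are as follows. First, recall that $f_2(n)=C_n$ (Segner's formula, as already noted above) and that, by Theorem \ref{te}(2) with $m=2$, $f_2(n)$ is the number of Dyck paths of semilength $n-1$ having hills in two colors; fix the encoding of such a path as a word of length $n-1$ over a finite alphabet $\alpha$ that is used in the proof of Theorem \ref{te}. Second, observe that $f_2(1)=C_1=1$, so the hypotheses of Proposition \ref{prr} are satisfied with $f_2$ playing the role of $f_0$. Third, adjoin a letter $x\notin\alpha$, interpret it as a hill of color $3$, and apply Proposition \ref{prr}: the value $g_3(n,k)$ equals the number of words of length $n-1$ over $\alpha\cup\{x\}$ in which $x$ occurs exactly $k-1$ times, that is, the number of Dyck paths of semilength $n-1$ having hills in three colors, of which $k-1$ are in color $3$. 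This is assertion (1). Fourth, sum over $k$ and invoke (\ref{ffm}): $f_3(n)=\sum_{k=1}^{n}g_3(n,k)$ counts all Dyck paths of semilength $n-1$ having hills in three colors, which is assertion (2).

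The only point that needs care — and it is exactly the point already handled inside the proof of Theorem \ref{te} — is the second step: one must know that the Dyck paths of semilength $n-1$ with hills in two colors form a family of words of length $n-1$ over a single fixed finite alphabet, so that Proposition \ref{prr} applies verbatim once $x$ is adjoined. Once that encoding is fixed the argument is purely formal and there is no remaining obstacle; in particular, the derivation of closed and recursive formulas for $g_3(n,k)$, in analogy with the treatment of $g_2(n,k)$, is a separate matter not needed for this corollary.
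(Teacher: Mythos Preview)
Your proposal is correct and is exactly the paper's approach: the corollary is stated immediately after ``From Theorem \ref{te}, we obtain'' with no separate proof, so it is precisely the specialization $m=3$ of Theorem \ref{te} that you spell out. Your unwinding of the inductive step via Proposition \ref{prr} is a faithful (and more explicit) rendering of what the paper leaves implicit.
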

 We derive an explicit formula for $g_3(n,k)$.
\begin{proposition}We have
\begin{equation}\label{bnk}
g_3(n,k)=\frac{k}{n}{2n\choose n-k}.
\end{equation}
\end{proposition}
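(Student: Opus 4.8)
The plan is to compute $g_3(n,k)$ by passing to ordinary generating functions and invoking Lagrange inversion. Recall that it has already been established above that $f_2(n)=C_n$ for every $n\geq 1$. Put $h(x)=\sum_{i\geq 1}f_2(i)x^i=\sum_{i\geq 1}C_ix^i$. Then (\ref{gf}) with $m=3$ gives $\sum_{n\geq k}g_3(n,k)x^n=h(x)^k$, so the whole problem reduces to extracting the coefficient $[x^n]h(x)^k$.

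Next I would record the functional equation satisfied by $h$. The Catalan generating function $g(x)=\frac{1-\sqrt{1-4x}}{2x}=\sum_{m\geq 0}C_mx^m$ satisfies $g=1+xg^2$, and since $h=g-1$ (passing from $\sum_{m\geq 0}C_mx^m$ to $\sum_{m\geq 1}C_mx^m$ only removes the constant term $C_0=1$), substituting $g=1+h$ yields $h=x(1+h)^2$. Because $C_1=1$, we have $h(x)=x+O(x^2)$, so $h$ is a well-defined formal power series with $h(0)=0$ and nonzero linear coefficient; thus $h=x\phi(h)$ with $\phi(t)=(1+t)^2$ and $\phi(0)=1\neq 0$, which is precisely the setting in which Lagrange inversion applies. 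I expect this identification of $h$ with $g-1$, and the transfer of the quadratic equation, to be the only step needing care; everything around it is routine.

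Finally, Lagrange inversion gives, for $n\geq 1$,
\begin{equation*}
[x^n]h(x)^k=\frac{k}{n}\,[t^{n-k}]\phi(t)^n=\frac{k}{n}\,[t^{n-k}](1+t)^{2n}=\frac{k}{n}\binom{2n}{n-k},
\end{equation*}
which is exactly (\ref{bnk}); note that for $k>n$ both sides vanish, so the formula is consistent with $g_3$ being lower triangular. As a sanity check, $k=1$ recovers $g_3(n,1)=\frac{1}{n}\binom{2n}{n-1}=C_n=f_2(n)$ and $k=n$ gives $g_3(n,n)=1=f_2(1)^n$. A more computational alternative would instead feed the explicit formula (\ref{g2}) for $g_2(n,i)$ into the relation $g_3(n,k)=\sum_{i=k}^{n}\binom{i-1}{k-1}g_2(n,i)$ coming from (\ref{gsum})--(\ref{CM}) and collapse the resulting Vandermonde-type sum, but this route is considerably messier and I would avoid it in favor of the generating-function argument above.
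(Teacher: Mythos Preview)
Your proof is correct, but it follows a genuinely different route from the paper's. You pass to generating functions, identify $h(x)=\sum_{i\geq 1}C_ix^i$ with $g(x)-1$, derive the functional equation $h=x(1+h)^2$, and extract $[x^n]h^k$ via Lagrange inversion in one clean stroke. The paper instead does precisely the ``computational alternative'' you mention at the end: it starts from the relation $g_3(n,k)=\sum_{i=k}^n\binom{i-1}{k-1}g_2(n,i)$ coming from (\ref{gsum}), plugs in the explicit formula (\ref{g2nk}) for $g_2(n,i)$, uses $i\binom{i-1}{k-1}=k\binom{i}{k}$, and reduces the claim to the binomial identity $\binom{2n}{n+k}=\sum_{i=k}^n\binom{i}{k}\binom{2n-i-1}{n-1}$, which it then proves by a short combinatorial argument. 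Your approach is quicker and more self-contained, since it requires only $f_2(n)=C_n$ and the standard Catalan functional equation; the paper's approach is more laborious but has the side benefit of producing Identity~(\ref{bi}) as an independent byproduct and of keeping everything internal to the $g_m$-machinery without invoking Lagrange inversion.
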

\begin{proof} According to (\ref{ffm}), we have
\begin{equation*} g_3(n,k)=\sum_{i=k}^n{i-1\choose k-1}g_2(n,i).
\end{equation*}
Using (\ref{g2nk}) yields
\begin{equation*}
g_3(n,k)=\sum_{i=k}^n{i-1\choose k-1}\frac{i\cdot(n+1)\cdot(n+2)\cdots (2n-i-1)}{(n-i)!}
\end{equation*}
Using the identity $i\cdot{i-1\choose k-1}=k{i\choose k}$ implies
\begin{equation*}
g_3(n,k)=\frac kn\cdot\sum_{i=k}^n{i\choose k}\frac{n(n+1)\cdot(n+2)\cdots (2n-i-1)}{(n-i)!},
\end{equation*}
that is
\begin{equation*}
g_3(n,k)=\frac kn\cdot\sum_{i=k}^n{i\choose k}{2n-i-1\choose n-1}.
\end{equation*}
Hence, our statement is equivalent to the following binomial identity.
\end{proof}
\begin{identity} We have
\begin{equation}\label{bi}{2n\choose n+k}=\sum_{i=k}^n{i\choose k}{2n-i-1\choose n-1}.\end{equation}
\end{identity}
\begin{proof}
We prove the identity combinatorially. We count subsets of $n+k$ elements of the set width $2n$ element after the position of the $(k+1)$th element. If $i+1$ is the position of the $k+1$th element of the set then we have ${i\choose k}$ elements before and ${2n-i-1\choose n-1}$ after this element. Since $k\leq i\leq n$ the identity is true.
\end{proof}
\begin{remark}
The Catalan triangle $g_3(n,k)$ is defined by Shapiro~\cite{wsh}.
\end{remark}
Taking into account his original combinatorial interpretation, we obtain the following:
\begin{identity} The following sets has the same number of elements:
\begin{enumerate}
\item The number of nonintersecting lattice paths
in the first quadrant at the distance $k$.
\item The number of Dyck paths of semilength $n-1$ having hills in three colors, of which $k-1$ hills are of color $3$.
\end{enumerate}
\end{identity}
\begin{proof}  Dyck paths consider here have a simple recurrence, and it is easy to see that $g_3(n,k)$ satisfies this recurrence.
\end{proof}
\begin{remark} This array is also considered in~Koshy\cite[Chapter 14]{tk}.
\end{remark}

We derive one more relation between $g_2(n,k)$ and $g_3(n,k)$.
 \begin{proposition} We have
 \begin{equation}\label{cc2}g_2(n,k)=\sum_{i=0}^{k-1}{k\choose i}g_3(n-k,k-i).\end{equation}
\end{proposition}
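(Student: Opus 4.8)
The plan is to prove the identity~(\ref{cc2}) with ordinary generating functions, reading both sides off from~(\ref{gf}). First I would record the two relevant series. Since $f_1(i)=C_{i-1}$, formula~(\ref{gf}) gives
\[
\sum_{n\ge k}g_2(n,k)x^n=\bigl(xg(x)\bigr)^k ,
\]
where $g(x)=\frac{1-\sqrt{1-4x}}{2x}$ is the Catalan generating function. Since $f_2(i)=C_i$ and $\sum_{i\ge1}C_ix^i=g(x)-1$, the same formula gives $\sum_{m\ge j}g_3(m,j)x^m=\bigl(g(x)-1\bigr)^j$; here I use the convention $g_3(m,j)=0$ whenever $j>m$ (consistent with~(\ref{cmnk}), and with $g(x)-1$ having no constant term), so this is a genuine identity of formal power series.

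Next I would multiply the right-hand side of~(\ref{cc2}) by $x^n$ and sum over $n\ge k$. Substituting $j=k-i$ (so that $\binom{k}{i}=\binom{k}{j}$) and shifting the summation variable $n\mapsto n-k$ turns the double sum into $x^k\sum_{j=1}^{k}\binom{k}{j}\bigl(g(x)-1\bigr)^j$. Adding and subtracting the missing $j=0$ term and applying the binomial theorem, this equals $x^k\bigl[g(x)^k-1\bigr]=\bigl(xg(x)\bigr)^k-x^k$. Comparing with $\sum_{n\ge k}g_2(n,k)x^n=\bigl(xg(x)\bigr)^k$, the two series agree in every coefficient except that of $x^k$, which establishes~(\ref{cc2}) for every $n>k$. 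The leftover $x^k$ is exactly the gap between $g_2(k,k)=1$ and $g_3(0,j)=0$ for $j\ge1$, so the statement really carries the hypothesis $n>k$.

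The step I expect to be the only real obstacle is the bookkeeping — the index shift $n\mapsto n-k$ together with the treatment of the absent $j=0$ term, which is what forces $n>k$; the mathematical content is just the binomial theorem. As a back-up I would argue entirely from the closed forms: by~(\ref{bnk}), $g_3(n-k,j)=\frac{j}{n-k}\binom{2(n-k)}{n-k-j}$; using $j\binom{k}{j}=k\binom{k-1}{j-1}$ and the Vandermonde convolution
\[
\sum_{j\ge1}\binom{k-1}{j-1}\binom{2(n-k)}{n-k-j}=\binom{2n-k-1}{n-k-1}=\binom{2n-k-1}{n},
\]
the sum on the right of~(\ref{cc2}) collapses to $\frac{k}{n-k}\binom{2n-k-1}{n}$, which is~(\ref{g2}). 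This second route runs into the same single issue, namely verifying that the summation limits are precisely those for which Vandermonde applies.
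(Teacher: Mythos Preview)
Your proof is correct, and you rightly note that the identity requires $n>k$ (the paper's own argument also singles out $n=k$, where $g_2(n,n)=1$, and then restricts to $k<n$). Your generating-function route is, at its core, a repackaging of the paper's convolution argument: the paper writes $g_2(n,k)=\sum_{j_1+\cdots+j_k=n-k,\ j_t\ge0}C_{j_1}\cdots C_{j_k}$ and classifies terms by how many of the $j_t$ vanish, which is exactly what the binomial expansion $g(x)^k=\bigl((g(x)-1)+1\bigr)^k=\sum_{j}\binom{k}{j}(g(x)-1)^j$ encodes at the level of generating functions. Your backup via the closed forms~(\ref{bnk}) and~(\ref{g2}) together with the Vandermonde convolution is a genuinely different, purely computational verification; it has the advantage of being a one-line check once those formulas are in hand, but it does not reveal \emph{why} the identity holds, whereas both the paper's decomposition and your generating-function version make the underlying combinatorics (separating zero from nonzero parts) transparent.
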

\begin{proof}
Using~\cite[Proposition 2]{ja3}, we obtain
\[g_2(n,k)=\sum_{i_1+i_2+\cdots+i_k=n}C_{i_1-1}\cdot
C_{i_2-1}\cdots C_{i_k-1},\]
where the sum is taken over positive $i_t,(t=1,\ldots,k)$.
Replacing $i_t-1=j_t,(t=1,2,\ldots,k)$ we obtain
\[g_2(n,k)=\sum_{j_1+j_2+\cdots+j_k=n-k}C_{j_1}\cdot
C_{j_2}\cdots C_{j_k},\]
where the sum is taken over nonnegative $j_t$.
Note that in the case $k=n$ we have $g_2(n,n)=1.$ We consider the case $k<n$.
Assume that there are $i,(0\leq i\leq k-1)$ of $j_t$ which are equal $0$. Then
\[g_2(n,k)=\sum_{i=0}^{k-1}{k\choose i}\cdot\sum_{s_1+s_2+\cdots+s_{k-i}=n-k}
C_{s_1}\cdot C_{s_2}\cdots C_{s_{k-i}},\]
where $s_t>0,(t=1,2,\ldots,k-i)$ and $k-i\geq n-k$.
According Equation (\ref{sap}), we have
\[\sum_{s_1+s_2+\cdots+s_{k-i}=n-k}
C_{s_1}\cdot C_{s_2}\cdots C_{s_{k-i}}=g_3(n-k,k-i),\]
which proves the  statement.
\end{proof}
We next derive an explicit formula for $f_3(n)$.
\begin{proposition}The following formula holds:
\begin{equation*}f_3(n)={2n-1\choose n}.
\end{equation*}
\end{proposition}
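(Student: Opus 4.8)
The plan is to combine the additive relation (\ref{ffm}), which gives $f_3(n)=\sum_{k=1}^{n}g_3(n,k)$, with the closed form (\ref{bnk}) just proved for $g_3(n,k)$, and then to sum the resulting binomial series. Substituting $g_3(n,k)=\frac{k}{n}\binom{2n}{n-k}$, the statement reduces to the identity
\begin{equation*}\sum_{k=1}^{n}\frac{k}{n}\binom{2n}{n-k}=\binom{2n-1}{n}.\end{equation*}
I would first reindex by $j=n-k$, so that $j$ runs from $0$ to $n-1$, and split
\begin{equation*}\sum_{j=0}^{n-1}\frac{n-j}{n}\binom{2n}{j}=\sum_{j=0}^{n-1}\binom{2n}{j}-\frac{1}{n}\sum_{j=0}^{n-1}j\binom{2n}{j}.\end{equation*}

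Each of the two sums is a ``half-row'' sum that can be evaluated using the symmetry of Pascal's triangle. For the first, $\binom{2n}{j}=\binom{2n}{2n-j}$ yields $\sum_{j=0}^{n-1}\binom{2n}{j}=2^{2n-1}-\frac{1}{2}\binom{2n}{n}$. For the second, the absorption identity $j\binom{2n}{j}=2n\binom{2n-1}{j-1}$ together with an index shift gives $\frac{1}{n}\sum_{j=0}^{n-1}j\binom{2n}{j}=2\sum_{i=0}^{n-2}\binom{2n-1}{i}$, and since the row $2n-1$ is odd it splits into two equal halves, so $\sum_{i=0}^{n-1}\binom{2n-1}{i}=2^{2n-2}$ and hence $\sum_{i=0}^{n-2}\binom{2n-1}{i}=2^{2n-2}-\binom{2n-1}{n-1}$. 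Subtracting, the powers of two cancel and one is left with $f_3(n)=2\binom{2n-1}{n-1}-\frac{1}{2}\binom{2n}{n}$; finally $\binom{2n}{n}=2\binom{2n-1}{n-1}$ collapses this to $\binom{2n-1}{n-1}=\binom{2n-1}{n}$, as required.

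I expect the only delicate point to be the bookkeeping of these truncated row sums: one must treat the even row $2n$, which carries a genuine middle term $\binom{2n}{n}$, differently from the odd row $2n-1$, which splits evenly with no middle term, and one must keep the off-by-one in the shifted index straight. A cleaner alternative that sidesteps these sums entirely is a generating-function argument: since $f_3$ is the invert transform of $f_2$ and $f_2(n)=C_n$, its ordinary generating function satisfies $F_3(x)=F_2(x)/(1-F_2(x))$ with $F_2(x)=g(x)-1$; writing $s=\sqrt{1-4x}$ and using $g(x)=(1-s)/(2x)$ one simplifies $F_3(x)=\frac{1-s}{2s}=\frac{1}{2}\bigl((1-4x)^{-1/2}-1\bigr)$, whence extracting the coefficient of $x^n$ gives $f_3(n)=\frac{1}{2}\binom{2n}{n}=\binom{2n-1}{n}$.
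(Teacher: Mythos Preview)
Your primary argument is essentially the paper's: both start from $f_3(n)=\sum_{k=1}^n g_3(n,k)=\tfrac{1}{n}\sum_k k\binom{2n}{n-k}$, reindex by $j=n-k$, and reduce to elementary binomial identities. The only difference is cosmetic: the paper applies Pascal's rule $\binom{2n}{k}=\binom{2n-1}{k}+\binom{2n-1}{k-1}$ so that the two partial sums telescope directly to $\binom{2n-1}{n}$ without any powers of $2$ appearing, whereas you evaluate each half-row sum separately via symmetry, introducing $2^{2n-1}$ and $2^{2n-2}$ and then cancelling them. Both are correct; the paper's telescoping is slightly tidier and avoids the off-by-one bookkeeping you flagged.

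Your generating-function alternative is a genuinely different route not taken in the paper. It is cleaner: once one knows $F_2(x)=g(x)-1$ and $F_3=F_2/(1-F_2)$, the simplification to $\tfrac{1}{2}\bigl((1-4x)^{-1/2}-1\bigr)$ is immediate and the coefficient extraction is standard. The trade-off is that this argument imports the closed form of the Catalan generating function and the invert-transform functional equation, while the paper's proof (and your first one) stays at the level of finite binomial sums, in keeping with the paper's stated aim that the results depend only on elementary properties of Fine and Catalan numbers.
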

\begin{proof} We have
\begin{align*}f_3(n)&=\frac{1}{n}\sum_{k=1}^nk{2n\choose n-k}=\frac{1}{n}\sum_{k=0}(n-k){2n\choose k}\\&=\sum_{k=0}^n{2n\choose k}-\sum_{k=1}^n\frac{k}{n}\cdot\frac{2n}{k}\cdot{2n-1\choose k-1}\\&=1+\sum_{k=1}^n\left[{2n-1\choose k}+{2n-1\choose k-1}\right]-2\sum_{k=1}^n{2n-1\choose k-1}\\&=\sum_{k=0}^n{2n-1\choose k}-\sum_{k=0}^{n-1}{2n-1\choose k}\\&={2n-1\choose n}.
\end{align*}
\end{proof}
\begin{remark} A combinatorial proof of this equation is given in~\cite[Proposition 3.1]{wsh}.

The preceding proof means that all results in our paper depend only on the fundamental properties of Fine and Catalan numbers.
\end{remark}

\section{Catalan array $g_4(n,k)$}
This case is considered in~\cite[Section 4]{ja3},
where the following results are obtained.
\begin{proposition}
\begin{enumerate}
\item
\begin{equation}\label{ehe}
g_4(n,k)=\frac{2^{n-k}}{n!}\sum_{i=1}^k(-1)^{k-i}{k\choose i}\cdot\prod_{j=0}^{n-1}(i+2j).
\end{equation}
\item
The value of $g_4(n,k)$ is the number of ternary  words of length $2n-1$,  having $k-1$ letters equal to $2$, and in all binary subwords the number of ones is greater by $1$ than the number of zeros.
 Also, each $2$ is both preceded and followed by a binary subword.
\item The value of $f_4(n)$  is the number of ternary words of length $2n-1$  in which $2$ is preceded and followed by a binary subword in which the number of ones is greater by $1$ than the number of zeros.
\end{enumerate}
\end{proposition}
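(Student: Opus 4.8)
The plan is to establish the two combinatorial statements (2) and (3) first, by an explicit bijection, and then to read off the closed formula (1) from a generating-function computation. For (2), I would \emph{not} try to invoke Proposition~\ref{prr} directly: by the previous proposition $f_3(n)={2n-1\choose n}$, and this is not the number of words of length $n-1$ over a fixed alphabet (already ${5\choose 3}=10>3^2$). Instead I would unfold the defining sum~(\ref{cmnk}), $g_4(n,k)=\sum_{i_1+\cdots+i_k=n}f_3(i_1)\cdots f_3(i_k)$ over positive integers $i_t$. Since ${2i-1\choose i}$ is the number of binary words of length $2i-1$ with exactly $i$ ones (one more one than zero), I would map a composition $(i_1,\dots,i_k)$ of $n$ together with a choice of such words $w_1,\dots,w_k$ to the ternary word $w_1\,2\,w_2\,2\cdots 2\,w_k$. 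This word has length $\sum_{t}(2i_t-1)+(k-1)=2n-1$, exactly $k-1$ letters equal to $2$, each maximal binary block $w_t$ is nonempty with one more one than zero, and every $2$ is flanked by binary blocks; conversely the positions of the $2$'s recover $k$ and the blocks, and the number of ones in the $t$-th block recovers $i_t$, so the map is a bijection. For (3) I would then sum over $k$: by~(\ref{ffm}) $f_4(n)=\sum_{k=1}^n g_4(n,k)$, and the union over $k$ of the sets in (2) is precisely the set of ternary words of length $2n-1$ in which every $2$ is preceded and followed by a binary block with one more one than zero.

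For the explicit formula (1) I would use~(\ref{gf}): $\sum_{n\ge k}g_4(n,k)x^n=\bigl(\sum_{i\ge 1}f_3(i)x^i\bigr)^k$. From ${2i-1\choose i}=\tfrac12{2i\choose i}$ and $\sum_{i\ge 0}{2i\choose i}x^i=(1-4x)^{-1/2}$ we get $\sum_{i\ge 1}f_3(i)x^i=\tfrac12\bigl((1-4x)^{-1/2}-1\bigr)$. Expanding the $k$-th power by the binomial theorem gives $2^{-k}\sum_{i=0}^k(-1)^{k-i}{k\choose i}(1-4x)^{-i/2}$, while the generalized binomial series yields $(1-4x)^{-i/2}=\sum_{n\ge 0}\frac{2^n}{n!}\prod_{j=0}^{n-1}(i+2j)\,x^n$, the crux being the evaluation ${-i/2\choose n}(-4)^n=\frac{2^n}{n!}\prod_{j=0}^{n-1}(i+2j)$. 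Comparing coefficients of $x^n$ and discarding the $i=0$ summand, which vanishes for $n\ge 1$, produces~(\ref{ehe}).

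The main obstacle is the point already flagged in (2): Proposition~\ref{prr} cannot be applied verbatim, because the natural word model for $f_3$ lives on words of length $2n-1$, not $n-1$, so the bijection has to be built by hand from~(\ref{cmnk}); the care needed is in checking it is genuinely a bijection, in particular that the block decomposition --- including the two boundary blocks $w_1$ and $w_k$ --- is recovered unambiguously from the word. Once the binomial coefficient ${-i/2\choose n}$ is identified, step (1) is a routine series manipulation.
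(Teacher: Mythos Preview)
Your proposal is correct. The bijection in (2) is sound: the map $(w_1,\dots,w_k)\mapsto w_1\,2\,w_2\,2\cdots 2\,w_k$ is clearly invertible since the positions of the $2$'s determine the block decomposition, and the length and block conditions are exactly as you state. The generating-function derivation of (\ref{ehe}) is also correct, including the evaluation $\binom{-i/2}{n}(-4)^n=\frac{2^n}{n!}\prod_{j=0}^{n-1}(i+2j)$ and the observation that the $i=0$ term vanishes for $n\ge 1$.

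As for comparison with the paper: the paper does not actually prove this proposition here --- it simply imports the result from \cite[Section~4]{ja3}. So your write-up supplies an argument where the paper gives none. Your remark that Proposition~\ref{prr} does not apply verbatim is well taken: the natural model for $f_3(n)=\binom{2n-1}{n}$ is binary words of length $2n-1$, not words of length $n-1$ over a fixed alphabet, so the concatenation bijection must be built directly from (\ref{cmnk}) rather than read off from Proposition~\ref{prr}. This is exactly what you do, and it is the right way to obtain the specific ternary-word interpretation stated in parts (2) and (3).
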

As a consequence, we have the following Euler-type identities:
\begin{identity} The following sets have the same number of elements.
\begin{enumerate}
\item
The set of Dyck paths of semilength $n-1$ having hills in four  colors, of which $k-1$ in color $4$.
\item
The set of ternary  words of length $2n-1$,  having $k-1$ letters equal to $2$, and in all
binary subwords the number of ones is greater by $1$ than the number of zeros.
 Also, each $2$ is both preceded and followed by a binary subword.
\end{enumerate}
\end{identity}
\begin{identity}
\begin{enumerate}
\item
The set of Dyck paths of semilength $n-1$ having hills in four color.
\item
The set of ternary  words of length $2n-1$,  such that in all binary subwords the number of ones is greater by $1$ than the number of zeros, and  each $2$ is both preceded and followed by a binary subword.
\end{enumerate}
\end{identity}

\section{Some explicit formulas and identities}
From (\ref{CM}) and the fact that, for each integer $p$, we have
\begin{equation*}
L_n^p=\left(p^{i-j}{i-1\choose j-1}\right)_{n\times n},
\end{equation*}
 a mutually connection among different $g_m(n,k)$ is easy to obtain.

Up to now,  we have no an explicit formulas for $g_1(n,k)$.

In matrix form, we have $G_1(n)=G_3(n)L_n^{-2}.$ Hence, the following equation holds:
 \begin{equation}g_1(n,k)=\frac kn\cdot\sum_{i=k}^n(-2)^{i-k} {i\choose k}{2n\choose n-i}.\end{equation}
 Since $g_1(n,1)=\mathbb F_n$, we have the following identity for the Fine numbers:
\begin{identity}
\begin{equation*}\mathbb F_n=\frac 1n \cdot\sum_{i=1}^n(-2)^{i-1}\cdot i\cdot{2n\choose n-i}.
\end{equation*}
\end{identity}

We next prove the following binomial identity.
 \begin{identity}
 \begin{equation*}
{2n-2\choose n-1}=\sum_{i=1}^n(-1)^{i-1}i\cdot{2n\choose n-i}.
 \end{equation*}
 \end{identity}
 \begin{proof} We have
 \begin{equation*}f_1(n)=\sum_{k=1}^ng_1(n,k)=\frac 1n \sum_{k=1}^n\sum_{i=k}^nk(-2)^{i-k}{i\choose k}{2n\choose n-i}.\end{equation*}
 Changing the order of summation yields
 \begin{equation*} f_1(n)=\frac 1n\sum_{i=1}^n{2n\choose n-i}\cdot\sum_{k=1}^ik(-2)^{i-k}{i\choose k}.
 \end{equation*}
 \end{proof}
 Next, we have
 \begin{equation*}\sum_{k=1}^ik(-2)^{i-k}{i\choose k}=i\sum_{t=0}^{i-1}(-2)^t{i-1\choose t}=(-1)^{i-1}i.
 \end{equation*}

 Next, we write $g_2,g_3,g_4$ as alternating sums.

 Since $G_2(n)=G_3(n)\cdot L_n^{-1}$, we have
 \begin{equation}g_2(n,k)=\frac kn\cdot\sum_{i=k}^n(-1)^{i-k}{i\choose k}\cdot{2n\choose n-i}.\end{equation}
Comparing this equation and (\ref{g2}), we obtain the following identity:
\begin{identity} For $k>0$, we have
\begin{equation*}{2n-k-1\choose n}=\frac{n-k}{k}\sum_{i=k}^n(-1)^{i-k}{i\choose k}{2n\choose n-i}.\end{equation*}
\end{identity}

Also, since $g_2(n,1)=C_{n-1}$, we obtain the following identity for the Catalan numbers.
\begin{identity}
\begin{equation*}nC_{n-1}=\sum_{i=1}^n(-1)^{i-1}i\cdot{2n\choose n-i}.\end{equation*}
\end{identity}

We finish with two exotic identities. The first one consists of eigth items:
a sum, a product, two integers, a rising factorial, a falling factorial, and two binomial coefficients.
\begin{identity}
\begin{equation*}
k\cdot\prod_{i=1}^{n-k-1}(n+i)=(n-k-1)!\cdot\sum_{i=0}^{k-1}(k-i){k\choose i}{2n-2k\choose n+2k-i}.
\end{equation*}
\end{identity}
The identity is derived from (\ref{cc2}).

From $G_4(n)=G_3(n)L_n$, we obtain the identity consisting of ten items: an integer, two sums, a power of $-1$, a power of $2$, a falling  factorial, a rising factorial, and three binomial coefficients.
\begin{identity}
\begin{equation*}\sum_{i=k}^ni{i-1\choose k-1}{2n\choose n-i}=\frac{2^{n-k}}{(n-1)!}\sum_{i=1}^k(-1)^{k+i}{k\choose i} i(i+2)\cdots(i+2n-2).
\end{equation*}
\end{identity}


\begin{thebibliography}{9}
\bibitem{ja2} M. Janji\'c,  Some Formulas for Numbers
of Restricted Words, {\it  J. Integer Seq.} {\bf 20} (2017), Article 17.6.5
\bibitem{ja3} M. Janji\'c, Pascal Triangle and Restricted Words, arXiv:1705.02479.
\bibitem{tk} T. Koshy, {\it Catalan numbers with applications}, Oxford Univ. Press, (2009).
\bibitem{wsh} L. W. Shapiro,  A Catalan triangle, {\it Discrete Math.} 14 (1976) 83–90.
\bibitem{slo}  N. J. A. Sloane, The On-Line Encyclopedia of Integer Sequences, http://oeis.org.
\bibitem{rst} R. Stanley, {\it Catalan Numbers},  Cambridge University Press, (2015).
\end{thebibliography}
\end{document}